\documentclass[11pt,leqno]{amsart}
\usepackage{amsmath,amssymb,amsthm}
\usepackage{hyperref}
\newtheorem{theorem}{Theorem}[section]

\newtheorem{proposition}[theorem]{Proposition}

\theoremstyle{definition}
\newtheorem{definition}[theorem]{Definition}

\theoremstyle{remark}

\numberwithin{equation}{section}


\def\fnote#1{\footnote}

\def\ignora#1{}
\def\n3#1{\left\vert  \! \left\vert \! \left\vert \, #1 \, \right\vert \!
  \right\vert \! \right\vert }


\begin{document}

\title{ Diameter two properties and polyhedrality }

\author{ Gin{\'e}s L{\'o}pez-P{\'e}rez}\thanks{The research of G. L\'opez-P\'erez was partially supported by MINECO (Spain) Grant MTM2015-65020-P and Junta de Andaluc\'{\i}a Grant FQM-185.}
\address[G. L\'opez-P\'erez]{Universidad de Granada, Facultad de Ciencias.
Departamento de An\'{a}lisis Matem\'{a}tico 18071-Granada
(Spain) and Instituto de Matem\'aticas de la Universidad de Granada (IEMath-GR)} \email{glopezp@ugr.es}
\urladdr{\url{http://wdb.ugr.es/local/glopezp}}

\author{ Abraham Rueda Zoca }\thanks{The research of A.~Rueda Zoca was supported by a research grant Contratos predoctorales FPU del Plan Propio del Vicerrectorado de Investigaci\'on y Transferencia de la Universidad de Granada, by MINECO (Spain) Grant MTM2015-65020-P and by Junta de Andaluc\'ia Grants FQM-0185.}
\address[A. Rueda Zoca]{Universidad de Granada, Facultad de Ciencias.
Departamento de An\'{a}lisis Matem\'{a}tico, 18071-Granada
(Spain)} \email{ abrahamrueda@ugr.es}
\urladdr{\url{https://arzenglish.wordpress.com}}

\maketitle

\begin{abstract}
We give two examples of polyhedral Banach spaces failing all the diameter two properties, showing that there is not any connection between polyhedrality and the diameter two properties.
\end{abstract}

\bigskip

\section{Introduction}\label{sectintro}

\bigskip

Recall that a Banach space is said to have the \textit{slice diameter two property} (slice-D2P) (respectively \textit{diameter two property} (D2P), \textit{strong diameter two property} (SD2P)) if every slice (respectively non-empty relatively weakly open subset, convex combination of slices) of the unit ball has diameter two. These three geometrical properties, which are extremely opposite to the isomorphic ones given by the Radon-Nikodym property (respectively convex point of continuity property, strong regularity), have shown to be different in a extreme way \cite{eje1, eje2}.

It is a natural question how diameter two properties affect to the geometry of the unit ball of a Banach space. In this setting, very recent results have appeared (see e.g. \cite{alnt}) dealing with the problem of how convex and smooth can a Banach space with any diameter two property be. For instance, there exists an example of Banach space enjoying the D2P and being \textit{midpoint locally uniformly rotund} and it is known that the bidual of a Banach space with the SD2P can not be neither smooth nor strictly convex (see \cite{alnt} and references therein).

In this paper we will continue exploring this connection in an opposite way. Roughly speaking, we wonder if a Banach space whose unit sphere is formed by big faces can fail the diameter two properties. This face structure can be encoded by the concept of \textit{polyhedral Banach spaces}.

According to \cite{fove}, we shall give the following definition.

\begin{definition}\label{defipoly}
Let $X$ be a Banach space.
\begin{enumerate}
\item\label{defi1} We will say that $X$ is \textit{I-polyhedral} if 
$$(Ext B_{X^*})'\subseteq\{0\},$$
where the last accumulation is considered on the weak-star topology of $X^*$ and $Ext(B_{X^*})$ denotes the set of all extreme points of $B_{X^*}$.

\item\label{defi2} We will say that a Banach space $X$ is \textit{II-polyhedral} if
$$(Ext B_{X^*})'\subseteq rB_{X^*},$$
for some $0<r<1$, where the last accumulation is considered on the weak-star topology of $X^*$.

\item\label{defi3} We will say that a Banach space $X$ is \textit{III-polyhedral} if
$$(Ext B_{X^*})'\subseteq int(B_{X^*}),$$
where the last accumulation is considered on the weak-star topology of $X^*$.

\item\label{defi4} We will say that $X$ is \textit{IV-polyhedral} if $f(x)<1$ whenever $x\in S_X$ and $f\in Ext(B_{X^*})'$.

\item\label{defi5} We will say that $X$ is \textit{V-polyhedral} if $\sup\{f(x): f\in Ext(B_{X^*})\setminus D(x)\}<1$ for each $x\in S_X$ where
$$D(x):=\{f\in S_{X^*}: f(x)=1\}.$$

\item\label{defi6} We will say that $X$ is \textit{VI-polyhedral} if each $x\in S_X$ has a neighborhood $V$ satisfying
$$y\in V\cap S_X\Rightarrow [x,y]\subseteq V.$$

\item\label{defi7} We will say that $X$ is VII-polyhedral if the set
$$\{x\in S_X: \max \langle D(x),v\rangle\leq 0\}$$
is open in $S_X$ for each $v\in S_X$, where
$$\langle D(x),v\rangle:=\{f(v): f\in D(x)\}\subseteq \mathbb R.$$

\item\label{defi8} We will say that $X$ is K-polyhedral if the unit ball of every finite-dimensional subspace is a polytope, that is, the convex hull of a finite set.
\end{enumerate}
\end{definition}
It is known that each of the previous polyhedrality notions implies the following one and that no reverse implication holds  \cite{fove}. It is also known that if $X$ is a $I$-polyhedral Banach space then it is isometric to a subspace of $c_0(\Gamma)$, for $\Gamma=dens(X)$ \cite[Theorem 1.2]{fove}. Since every polyhedrality condition is inherited by closed subspaces then every infinite-dimensional I-polyhedral Banach space is a non-reflexive $M$-embedded Banach space \cite[Example III.1.4]{hww} and, as a consequence of the proof of \cite[Corollary 2.5]{lop}, these spaces have the SD2P. Summarising, I-polyhedral Banach spaces enjoy the SD2P. However, it is not clear at all whether the rest of notions of polyhedrality imply any diameter two property.

The aim of this note is to provide examples of polyhedral Banach spaces failing the diameter two properties. Indeed, by giving a slight modification of \cite[Example 4.1]{fove}, we prove in Theorem \ref{ejeIIpolysinstron} that, for each $\varepsilon>0$, there exists a II-polyhedral Banach space whose unit ball contains slices of diameter smaller than $\varepsilon$, proving in Proposition \ref{nodentaII} that this theorem can not be improved. We will also prove in Proposition \ref{ejeVIIpolydif} that there are VII-polyhedral Banach space whose dual unit ball contain a point of Fr\'echet differentiability by analysing \cite[Example 4.5]{fove}. This result proves that K-polyhedrality, which is the most natural and classical generalisation of polyhedrality to an infinite-dimensional framework, is not related at all with the diameter two properties.

\textbf{Notation:} We will consider only real Banach spaces. Given a Banach space $X$, we will denote the closed unit ball and the unit sphere by $B_X$ and $S_X$ respectively. We will also denote by $X^*$ the topological dual of $X$. By a slice of a bounded subset $C$ of $X$ we will mean a set of the following form:
$$S(C,f,\alpha):=\{x\in C: f(x)>\sup f(C)-\alpha\},$$
where $f\in X^*$ and $\alpha>0$. When $X$ is itself a dual Banach space, the previous set will be a $w^*$-slice if $f$ belongs to the predual of $X$.

Given a Banach space $X$ and a point $x\in X$, we say that $x$ is a \textit{point of Fr\'echet differentiability} if, for every $h\in X$, the following limit exists
$$\lim\limits_{t\rightarrow 0}\frac{\Vert x+th\Vert-\Vert x\Vert}{t}$$ 
and it is uniform for $h\in S_X$. It is known that $x$ is a point of Fr\'echet differentiabity of $X$ if, and only if, $\inf\limits_{\alpha>0}diam(S(B_{X^*},x,\alpha))=0$ \cite[Lemma 8.4]{fhhmpz}. Finally, we refer to \cite{hww} for a a detailed treatment about theory of $M$-embedded spaces.

\section{Main results}\label{sectmain}
\bigskip

As we have seen in Section \ref{sectintro}, infinite-dimensional I-polyhedral Banach space are non-reflexive $M$-embedded Banach spaces and, as a consequence, this class of spaces enjoys the SD2P. However, it is not so clear whether a similar statement can be formulated in relation to the rest of notions of polyhedrality. Indeed, the next theorem proves that the unit ball of a II-polyhedral Banach space can even have slices of small diameter.

\begin{theorem}\label{ejeIIpolysinstron}
For every $\varepsilon>0$ there exists a II-polyhedral Banach space $X$ such that $B_X$ contains slices of diameter smaller than $\varepsilon$.
\end{theorem}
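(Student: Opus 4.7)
The plan is to construct $X$ as a slight modification of the II-polyhedral space produced in \cite[Example 4.1]{fove}. That example provides a II-polyhedral Banach space $Y$---essentially a renorming of a subspace of $c_0$---whose norm has the form $\|y\|_Y=\sup_n |f_n(y)|$ for some bounded norming family $\{f_n\}\subseteq Y^*$, and for which some constant $r_0<1$ witnesses the inclusion $(\mathrm{Ext}\,B_{Y^*})'\subseteq r_0 B_{Y^*}$. Importantly, $Y$ is not I-polyhedral, which is what leaves room to engineer slices of arbitrarily small diameter.

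Given $\varepsilon>0$, I would isolate one distinguished functional $f$ from the norming family (or adjoin a new one) and rescale the remaining $f_n$'s by a factor $\lambda=\lambda(\varepsilon)\in(r_0,1)$, defining
\[
\|x\|_X:=\max\Bigl\{\,|f(x)|,\ \lambda\sup_n |f_n(x)|\,\Bigr\}.
\]
This is still an equivalent renorming of the underlying space because $r_0<\lambda<1$, and by construction $\pm f$ becomes strictly $w^*$-separated from the rescaled remainder $\lambda\{\pm f_n\}$. Consequently $f$ behaves as an ``almost strongly exposing'' functional at its norming point $e\in S_X$.

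Next I would verify II-polyhedrality of $X$ and produce the small slice. The extreme points of $B_{X^*}$ are contained in $\{\pm f\}\cup\lambda\{\pm f_n\}$, so their $w^*$-cluster set lies inside $\lambda\cdot(\mathrm{Ext}\,B_{Y^*})'\subseteq \lambda r_0 B_{X^*}$, strictly inside the dual unit ball, proving II-polyhedrality. For the small slice I would consider $S(B_X,f,\alpha)$ with $\alpha$ small: any $x\in B_X$ with $f(x)>1-\alpha$ satisfies $|f_n(x)|\le 1/\lambda$ for all $n$, and the strict gap $1-\lambda\sup_n|f_n(e)|>0$ forces $x$ into a small norm-neighbourhood of $e$, yielding diameter below $\varepsilon$ for suitable $\alpha$.

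The main obstacle is the simultaneous calibration of the parameters: shrinking the slice below $\varepsilon$ forces $f$ to dominate the rescaled $\lambda f_n$'s on a sufficiently wide neighbourhood of $e$, which restricts how large $\lambda$ can be; at the same time II-polyhedrality demands $\lambda<1$ strictly, and the whole construction must respect the concrete combinatorics of the $f_n$'s in \cite[Example 4.1]{fove}. The technical heart of the argument will be showing that for every $\varepsilon>0$ one can pick $\lambda\in(r_0,1)$ and $\alpha>0$ making both conditions compatible.
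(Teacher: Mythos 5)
Your overall strategy --- start from \cite[Example 4.1]{fove} and adjoin or privilege one functional $f$ so that it almost strongly exposes a point while the $w^*$-cluster set of the extreme points stays strictly inside the dual ball --- is exactly the route the paper takes. The gap is in the one step that actually has to produce the small slice. You claim that in the renorming $\Vert x\Vert_X=\max\{\vert f(x)\vert,\lambda\sup_n\vert f_n(x)\vert\}$ the ``strict gap'' $1-\lambda\sup_n\vert f_n(e)\vert>0$ forces every $x\in B_X$ with $f(x)>1-\alpha$ into a small neighbourhood of $e$. That implication is false for a pure max-type norm: the slice $S(B_X,f,\alpha)$ is the set $\{x:\vert f(x)\vert\le 1,\ \vert f_n(x)\vert\le 1/\lambda\ \text{for all } n,\ f(x)>1-\alpha\}$, and two of its points can differ by as much as $2/\lambda$ against every $f_n$, so nothing prevents the diameter from being $2$. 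Concretely, running your scheme on $c_0\oplus_\infty\mathbb R$ with $f=(0,1)$ and $f_n=(e_n,0)$ leaves the whole face $\{(x,1):\Vert x\Vert_\infty\le 1/\lambda\}$ inside the slice. What makes the paper's slice small is not a gap between $f$ and the $f_n$'s but the $\ell_1$-sum structure of the underlying space: the construction lives on $c_0\oplus_1\mathbb R$, the dual ball $B_{\ell_1\oplus_\infty\mathbb R}$ is enlarged by the two points $(0,\pm(1+r))$, and then $\Vert x\Vert_{c_0}+\vert\beta\vert\le |||(x,\beta)|||\le 1$ together with $(1+r)\beta>1-\delta$ forces $\Vert x\Vert_{c_0}\le (r+\delta)/(1+r)$; the remaining coordinates are coupled to the distinguished one through the $\ell_1$-constraint. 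Your proposal never records this coupling, and without it the argument does not close.

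Your calibration worry also points the wrong way. The slice diameter in the paper's example is of order $r$ (equivalently $1-\lambda$ after normalising $f$), so making it smaller than $\varepsilon$ requires $r<\varepsilon/2$, i.e.\ $\lambda$ \emph{close to} $1$, not bounded away from it; II-polyhedrality holds for every $r>0$ because the only cluster points $(0,\pm 1)$ of $Ext(U^*)$ lie in $\frac{1}{1+r}U^*$. There is thus no tension between the two requirements --- but neither can a single space work for all $\varepsilon$ simultaneously: by Proposition \ref{nodentaII} every slice of an infinite-dimensional II-polyhedral space has diameter at least $2(1-r)$, which is precisely why the theorem must produce a different space for each $\varepsilon$.
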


\begin{proof}
Pick $\varepsilon>0$ and choose $0<r<\frac{\varepsilon}{2}$. Define
$$U^*:=co\left(B_{(\ell_1\oplus_\infty\mathbb R)^*}\cup\left\{\left(0,1+r\right),\left(0,-1-r\right)\right\}\right),$$
which is clearly a weak$^*$ compact set in $(c_0\oplus_1\mathbb R)^*$. Consequently, there is a norm  $|||\cdot|||$ on $c_0\oplus \mathbb R$ whose unit ball is
$$U:=\{(x,\beta)\in c_0\oplus \mathbb R: \phi(x,\beta)\leq 1\ \mbox{ for all } \phi\in U^*\}.$$
Consider $X:=(c_0\oplus\mathbb R, |||\cdot|||)$, and let us prove that $X$ satisfies the desired requirements. First of all, the same argument of \cite[Example 4.1]{fove} shows that $X$ is a II-polyhedral Banach space. Indeed, it is clear that
$$Ext(U^*)=\left\{\left(0,\pm\left(1+r\right) \right)\right\}\cup \{(\xi e_n,\psi 1): n\in\mathbb N\mbox{ and } \xi,\psi\in\{-1,1\}\}.$$
Therefore, $(0,0)\notin Ext(U^*)'=\{(0,\pm 1)\}\subseteq \frac{1}{1+r}U^*$. Let us now prove that $B_X$ contains a slice of diameter smaller than $\varepsilon$. For this, notice that $B_{\ell_1\oplus_\infty\mathbb R}\subseteq U^*\subseteq (1+r)B_{\ell_1\oplus_\infty\mathbb R}$, so
$$\frac{1}{1+r}B_{c_0\oplus_1\mathbb R}\subseteq U\subseteq B_{c_0\oplus_1\mathbb R}.$$
Consequently, for each pair $(x,\beta)\in X$, it follows
\begin{equation}\label{desiteonorma}
\Vert (x,\beta)\Vert_1\leq |||(x,\beta)|||\leq (1+r)\Vert (x,\beta)\Vert_1.
\end{equation}
Now consider $\delta>0$ such that
\begin{equation}\label{desiteodelta}
2r+3\delta<\varepsilon.
\end{equation}
Define $S:=S(B_X,(0,1+r),\delta)$, and let us prove that $diam(S)<\varepsilon$. To this aim pick $(x,\beta)\in S$. Then
$$1\geq (1+r)\beta=(0,1+r)(x,\beta)>1-\delta\Rightarrow \frac{1}{1+r}\geq \beta\geq \frac{1-\delta}{1+r}.$$
The previous inequality joint to (\ref{desiteonorma}) yields
$$1\geq |||(x,\beta)|||\geq \Vert (x,\beta)\Vert_1=\Vert x\Vert+\vert \beta\vert>\Vert x\Vert+\frac{1-\delta}{1+r},$$
so $\Vert x\Vert\leq \frac{r+\delta}{1+r}$. Finally, given $(y,\gamma)\in S$, we have
$$|||((x,\beta)-(y,\gamma)|||\leq (1+r)\Vert (x,\beta)-(y,\gamma)\Vert_1=(1+r)(\Vert x-y\Vert+\vert \beta-\gamma\vert)$$
$$\leq 2(r+\delta)+\delta=2r+3\delta\mathop{\leq}\limits^{\mbox{(\ref{desiteodelta})}}\varepsilon.$$
From the arbitrariness of $(x,\beta),(y,\gamma)\in S$ we conclude that $diam(S)\leq \varepsilon$, so we are done.\end{proof}

In view of the above theorem, a natural question is whether the unit ball of a II-polyhedral Banach space can be dentable. The answer is negative, which proves that the conclusion of Theorem \ref{ejeIIpolysinstron} is sharp.
\begin{proposition}\label{nodentaII}
Let $X$ be an infinite-dimensional II-polyhedral Banach space. Then $B_X$ does not have slices of arbitrarily small diameter.
\end{proposition}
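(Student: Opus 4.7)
The plan is a proof by contradiction. Let $r\in(0,1)$ be the constant witnessing II-polyhedrality, so that $(Ext\,B_{X^*})'\subseteq rB_{X^*}$, and assume toward a contradiction that there exist slices $S_n:=S(B_X,f_n,\alpha_n)$ with $\|f_n\|_{X^*}=\sup f_n(B_X)=1$ and $diam(S_n)\to 0$. For each $n$ pick $x_n\in S_n$ and normalise so that $\|x_n\|=1$ (this preserves membership in $S_n$, since $f_n(x_n/\|x_n\|)\ge f_n(x_n)>1-\alpha_n$ whenever $\|x_n\|\le 1$). Bauer's maximum principle applied to the $w^*$-continuous map $h\mapsto h(x_n)$ on the $w^*$-compact convex set $B_{X^*}$ yields $g_n\in Ext\,B_{X^*}$ with $g_n(x_n)=1$. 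Since $\|g_n\|=1>r$, II-polyhedrality gives $g_n\notin(Ext\,B_{X^*})'$; combining with the $w^*$-lower-semicontinuity of the norm (so $\{h:\|h\|>r\}$ is $w^*$-open), $g_n$ is $w^*$-isolated in $\overline{Ext\,B_{X^*}}^{w^*}$.

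Next I would extract a quantitative gap from this isolation. Observe that $Ext\,B_{X^*}\subseteq S_{X^*}$ together with $(Ext\,B_{X^*})'\subseteq rB_{X^*}$ forces $Ext\,B_{X^*}$ to be entirely $w^*$-discrete. Applying Choquet's lemma to $g_n$ in the $w^*$-compact convex set $B_{X^*}$ produces $y_n\in S_X$ such that $g_n(y_n)>r$ while $h(y_n)\le r$ for every $h\in\overline{Ext\,B_{X^*}}^{w^*}\setminus\{g_n\}$: for $h\in(Ext\,B_{X^*})'$ the bound is automatic from $\|h\|\le r$ and $\|y_n\|=1$, and for the remaining extreme points one shrinks and then rescales the Choquet $w^*$-slice around $g_n$ so that the level drops to $r$ while $g_n(y_n)$ remains strictly above $r$. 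This is the quantitative $(1-r)$-gap between $g_n$ and the rest of $\overline{Ext\,B_{X^*}}^{w^*}$.

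Finally, I would combine this gap with the infinite-dimensionality of $X$ to produce, for each large $n$, a vector $z_n\in\ker f_n\cap\ker g_n$ with $\|z_n\|\ge c(r)>0$ for a constant $c(r)$ of order $1-r$ and with $x_n\pm z_n\in B_X$. Since then $f_n(x_n\pm z_n)=f_n(x_n)>1-\alpha_n$, both $x_n\pm z_n$ lie in $S_n$, giving $diam(S_n)\ge 2\|z_n\|\ge 2c(r)$ and contradicting $diam(S_n)\to 0$. The construction of $z_n$ begins with $y_n$ and corrects it inside the subspace $\ker f_n\cap\ker g_n$ (which has codimension at most two in $X$ and is thus infinite-dimensional, allowing the correction without destroying the norm). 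Bauer's principle then verifies $\|x_n\pm z_n\|\le 1$ by splitting extreme points into three groups: $h\in(Ext\,B_{X^*})'$ (controlled by $\|h\|\le r$), $h=g_n$ (handled by $g_n(z_n)=0$), and $h\in Ext\,B_{X^*}\setminus\{g_n\}$ (controlled by $h(y_n)\le r$ combined with $h(x_n)\le 1$). The principal obstacle is precisely this last construction: turning the dual-side $w^*$-gap $1-r$ into a uniform primal-side norm lower bound on $z_n$, and in particular arranging in Paragraph 2 that the Choquet separation level can be taken equal to $r$; a careful balancing of Bauer's constraints and exploitation of the infinite-dimensionality of $\ker f_n\cap\ker g_n$ is required.
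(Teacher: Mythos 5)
Your overall strategy---perturb a point $x_n$ of the slice by a vector lying in suitable kernels and check membership in $B_X$ by testing against extreme points of $B_{X^*}$---is the same as the paper's, but the decisive step has a genuine gap. In your final verification that $x_n\pm z_n\in B_X$ you split $\overline{Ext\,B_{X^*}}^{w^*}$ into $(Ext\,B_{X^*})'$, $\{g_n\}$ and $Ext\,B_{X^*}\setminus\{g_n\}$, and for the last group you claim control from ``$h(y_n)\le r$ combined with $h(x_n)\le 1$''. This does not close: for an extreme point $h\ne g_n$ with $h(x_n)$ close to $1$ (such $h$ can perfectly well exist, since $g_n$ need not be the only nearly-norming extreme functional of $x_n$) you only get $h(x_n\pm z_n)\le h(x_n)+\lvert h(z_n)\rvert$, and your separation bound $h(y_n)\le r$ makes $\lvert h(z_n)\rvert$ of order $(1-r)r$, not of order $1-h(x_n)$, which can be arbitrarily small; so $x_n\pm z_n$ may leave the unit ball. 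The paper closes exactly this hole with a different key observation: the set $A:=\{f\in Ext(B_{X^*}):f(x)>r\}$ is \emph{finite}, because any $w^*$-accumulation point of it would lie in $(Ext\,B_{X^*})'\subseteq rB_{X^*}$; one then chooses the perturbation $y\in\bigcap_{f\in A}\ker f\cap\ker g\cap S_X$ (possible since $X$ is infinite-dimensional and only finitely many functionals are involved), so that \emph{every} dangerous extreme functional is annihilated at once, while the remaining ones satisfy $f(x)\pm(1-r)f(y)\le r+(1-r)=1$. This yields directly $diam(S)\ge 2(1-r)$ for every slice, with no contradiction argument, no Bauer and no Choquet.

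A secondary problem is the Choquet step itself: the $w^*$-slice isolating $g_n$ comes with $y\in S_X$ and a separation level $1-\beta$ where $\beta$ is small and uncontrolled; rescaling $y$ by $t=r/(1-\beta)$ does drop the level of the other extreme points to $r$, but it simultaneously drops $g_n(ty)$ to barely above $r$ and takes $ty$ off the sphere, so no gap of order $1-r$ between $g_n(y_n)$ and the rest can be extracted this way. You flagged this as the principal obstacle, and indeed it cannot be repaired by rescaling alone; but even granting a perfect separation, the verification described above still fails, so the missing ingredient is really the finiteness of $A$ together with the choice of the perturbation in the intersection of all the corresponding kernels.
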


\begin{proof}
Since $X$ is II-polyhedral there exists $r<1$ such that
\begin{equation}\label{condiextreprop}
(Ext(B_{X^*})'\subseteq rB_X.
\end{equation}
Let us prove that each slice of $B_X$ has diameter, at least, $2(1-r)$. To this aim pick a slice $S:=S(B_X,g,\alpha)$ and a point $x\in S$. Notice that (\ref{condiextreprop}) joint to the weak-star compactness of $B_{X^*}$ implies that the set
$$A:=\{f\in Ext(B_{X^*}):f(x)>r\}$$
is finite. Hence, we can consider $y\in \bigcap\limits_{f\in A}ker(f)\cap ker(g)\cap S_X$. Let us prove that $x\pm (1-r)y\in S$. On the one hand, we have
$$g(x\pm(1-r)y)=g(x)>1-\alpha$$
because $x\in S$. On the other hand, we have
$$\Vert x\pm (1-r)y\Vert=\sup\limits_{f\in Ext(B_{X^*})}f(x)\pm (1-r)f(y)=$$
$$\max\left\{\sup\limits_{f\in A} f(x)\pm (1-r)f(y), \sup\limits_{f\in Ext(B_{X^*})\setminus A} f(x)\pm (1-r)f(y) \right\}.$$
From the definition of $A$ and the assumptions on $y$ we deduce that the previous estimate is smaller than or equal to $1$. Summarising we have proved that $x\pm(1-r)y\in S$, so $diam(S)\geq 2(1-r)$, as desired.\end{proof}
The next proposition proves that we can even go further when considering a more relaxed notion of polyhedrality.
\begin{proposition}\label{ejeVIIpolydif}
There exists a VII-polyhedral Banach space whose dual unit ball contains points of Fr\'echet differentiability.
\end{proposition}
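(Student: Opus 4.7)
The plan is to take the space $X$ constructed in \cite[Example 4.5]{fove}, use the VII-polyhedrality established there as a black box, and then exhibit a single point of $B_{X^*}$ at which the norm of $X^*$ is Fr\'echet differentiable. That construction has the same flavour as the renorming used in the proof of Theorem~\ref{ejeIIpolysinstron}: one starts from a polyhedral base space, fixes a weak$^*$-compact convex set $U^*$ in its dual obtained by enlarging the original dual ball with finitely many extremal points, and defines $X$ by polarity from $U^*$. The added points in \cite[Example 4.5]{fove} are tuned precisely so that VII-polyhedrality is retained but VI-polyhedrality (and everything stronger) fails, and this tuning is what will produce a smooth direction in $X^*$.

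By the Fr\'echet criterion recalled in Section~\ref{sectintro}, an element $f_0\in S_{X^*}$ is a point of Fr\'echet differentiability of the norm of $X^*$ if and only if
\[
\inf_{\alpha>0}\operatorname{diam}\bigl(S(B_{X^{**}},f_0,\alpha)\bigr)=0.
\]
Since $B_X$ is $w^*$-dense in $B_{X^{**}}$ by Goldstine and the norm on $X^{**}$ is $w^*$-lower semicontinuous, the slice $S(B_{X^{**}},f_0,\alpha)$ coincides with the $w^*$-closure of $S(B_X,f_0,\alpha)$ and in particular has the same diameter. The problem therefore reduces to the purely internal statement
\[
\inf_{\alpha>0}\operatorname{diam}\bigl(S(B_X,f_0,\alpha)\bigr)=0.
\]

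For the concrete choice of $f_0$, I would take the normalised functional dual to the ``added'' extremal direction of $U^*$, i.e.\ the analogue of the functional $(0,1+r)$ appearing in Theorem~\ref{ejeIIpolysinstron}. The sandwich $\|\cdot\|_{\text{base}}\le |||\cdot|||\le C\,\|\cdot\|_{\text{base}}$ between the renormed norm and the background norm (the analogue of~\eqref{desiteonorma}) forces the slice $S(B_X,f_0,\alpha)$ to concentrate near the exposing vector. The advantage over the situation of Theorem~\ref{ejeIIpolysinstron} is that \cite[Example 4.5]{fove} places the added extremal points so that the distinguished direction is not just well separated from the rest but actually strictly exposed by $f_0$, and this is what should drive the slice diameter all the way down to $0$ instead of only below a prescribed $\varepsilon$.

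The main obstacle will be precisely this strict-exposition step. In Theorem~\ref{ejeIIpolysinstron} a single scalar estimate like~\eqref{desiteodelta} suffices to cap the diameter by an arbitrary $\varepsilon>0$; squeezing it to $0$ requires exploiting the finer geometry of \cite[Example 4.5]{fove}, where giving up II-- through VI-polyhedrality creates room for a genuinely Fr\'echet smooth functional in $X^*$ while keeping VII-polyhedrality intact. Verifying that these two demands are mutually compatible in the specific construction of \cite{fove}, and producing the quantitative estimate on $\operatorname{diam}(S(B_X,f_0,\alpha))$ that witnesses it, is the technical heart of the argument.
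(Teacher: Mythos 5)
Your high-level strategy coincides with the paper's: take the space of \cite[Example 4.5]{fove}, quote its VII-polyhedrality, and verify Fr\'echet differentiability of a dual norm at a single functional via the \v{S}mulyan-type criterion, i.e.\ by showing that the slices $S(B_X,f_0,\alpha)$ shrink to diameter $0$ (your Goldstine reduction from $B_{X^{**}}$ to $B_X$ is correct and is implicitly what the paper uses). However, the proposal stops exactly where the proof has to begin. You never identify $f_0$ concretely and you explicitly defer ``the quantitative estimate on $\operatorname{diam}(S(B_X,f_0,\alpha))$'' as ``the technical heart of the argument''; that estimate \emph{is} the proof, and without it nothing has been established.

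Moreover, your guessed picture of \cite[Example 4.5]{fove} is inaccurate, which would derail the plan as stated. That example is not obtained by polarity from a dual ball enlarged by finitely many extremal points (the mechanism of Theorem \ref{ejeIIpolysinstron}); it is the explicit renorming of $c_0$ by
$$|||x|||=\max\Bigl\{\max_{n\geq 2}\vert x(n)\vert,\ \max_{n\geq 2}\bigl(\vert x(1)\vert+\tfrac{1}{3}\vert x(n)\vert\bigr),\ \sup_{n\geq 2}\bigl(\omega_n\vert x(1)\vert+\tfrac{1}{2}\vert x(n)\vert\bigr)\Bigr\},\qquad \omega_n\uparrow 1,$$
so there is no single ``added extremal direction'' to dualize, and no sandwich inequality with constant $C$ independent of $\alpha$ will by itself force the diameter to $0$ (as you note, in Theorem \ref{ejeIIpolysinstron} that technique only caps the diameter by a fixed $\varepsilon$). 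The correct choice is $f_0=e_1^*$, and the missing computation is short but essential: if $x\in S(B_X,e_1^*,\varepsilon)$ then $x(1)>1-\varepsilon$, and the middle term of the norm gives $1\geq \vert x(1)\vert+\frac{1}{3}\vert x(n)\vert$ for every $n\geq 2$, hence $\vert x(n)\vert\leq 3\varepsilon$; two such points therefore differ by at most $\varepsilon$ in the first coordinate and $6\varepsilon$ in the others, and plugging these bounds back into the three terms of $|||\cdot|||$ yields $\operatorname{diam}(S(B_X,e_1^*,\varepsilon))\leq 4\varepsilon\to 0$. Until you supply this (or an equivalent) estimate, the proposal is a plausible outline rather than a proof.
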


\begin{proof}
Let $\{\omega_n\}_{n\geq 2}$ be a sequence in $\left]\frac{5}{6},1\right]$ such that $\{\omega_n\}\rightarrow 1$. Define $X:=(c_0,|||\cdot|||)$, where the norm $|||\cdot|||$ is defined by
$$|||x|||:=\max\left\{\max\limits_{n\geq 2} \vert x(n)\vert, \max\limits_{n\geq 2} \vert x(1)\vert + \frac{1}{3}\vert x(n)\vert, \sup\limits_{n\geq 2} \omega_n\vert x(1)\vert+\frac{1}{2}\vert x(n)\vert \right\}.$$
Then $X$ is an example of VII-polyhedral Banach space which is not a VI-polyhedral Banach space \cite[Example 4.5]{fove}. Our aim is to show that $B_{X^*}$ has a point of Fr\'echet differentiability. Note that, from the definition of the norm of $X$, it follows that $e_1\in S_X$ and $e_1^*\in S_{X^*}$. Pick $\varepsilon>0$ and consider
$$S_\varepsilon=S(B_X,e_1^*,\varepsilon).$$
Given $x,y\in S_\varepsilon$ we have that
$$1-\varepsilon<x(1)\Rightarrow 1\geq |||x|||\geq \max\limits_{n\geq 2} \vert x(1)\vert+ \frac{1}{3}\vert x(n)\vert\geq 1-\varepsilon+\frac{1}{3}\max\limits_{n\geq 2} \vert x(n)\vert.$$
Thus $\max\limits_{n\geq 2} \vert x(n)\vert\leq 3\varepsilon$ and $\max\limits_{n\geq 2} \vert y(n)\vert\leq 3\varepsilon.$
Hence $\vert x(1)-y(1)\vert\leq \varepsilon$ and $\max\limits_{n\geq 2}\vert x(n)-y(n)\vert\leq 6\varepsilon$. Keeping in mind the above estimate we get
$$||| x-y|||\leq \max\{\varepsilon,3\varepsilon,
4\varepsilon \}=4\varepsilon.$$
Hence $diam(S(B_X,e_1^*,\varepsilon))\leq 4\varepsilon$, and thus
$$\lim\limits_{\varepsilon\rightarrow 0} diam(S(B_X,e_1^*,\varepsilon))=0.$$
Consequently, $B_{X^*}$ contains a point of Fr\'echet differentiability, so we are done.
\end{proof}

\end{document}